\newtheorem{theorem}{Theorem}
\newtheorem{lemma}[theorem]{Lemma}
\newtheorem{conjecture}[theorem]{Conjecture}
\theoremstyle{definition}
\theoremstyle{remark}
\newtheorem{rmk}[theorem]{Remark}
\DeclareMathOperator{\GL}{GL}
\DeclareMathOperator{\Spin}{Spin}
\newcommand{\Z}{\mathbb Z}
\newcommand{\Q}{\mathbb Q}
\newcommand{\F}{\mathbb F}
\renewcommand{\P}{\mathbf P}
\newcommand{\f}{\mathfrak f}
\newcommand{\V}{V}
\newcommand{\Vf}{\V_{\mathrm{f}}}
\newcommand{\tB}{\mathrm{B}}
\newcommand{\tD}{\mathrm{D}}
\newcommand{\Hy}{\mathbf H}
\newcommand{\Isom}{\mathrm{Isom}}
\newcommand{\G}{\mathrm G}
\newcommand{\GG}{\mathbf G}
\newcommand{\MM}{\mathbf M}
\renewcommand{\O}{\mathcal O}
\newcommand{\PP}{\mathbf P}
\newcommand{\bs}{\backslash}
\newcommand{\vol}{\mathrm{vol}}
\newcommand{\blambda}{\overline{\lambda}}
\begin{document}

\title{Hyperbolic manifolds of small volume}

\author{Mikhail Belolipetsky}
\thanks{Belolipetsky supported by a CNPq research grant}
\address{
IMPA\\
Estrada Dona Castorina, 110\\
22460-320 Rio de Janeiro, Brazil}
\email{mbel@impa.br}

\author{Vincent Emery}
\thanks{Emery supported by SNSF, project no. {\tt PA00P2-139672} and {\tt PZ00P2-148100}}
\address{
FSB-MATHGEOM\\
EPF Lausanne\\
B\^atiment MA, Station 8\\
CH-1015 Lausanne\\
Switzerland
}
\email{vincent.emery@gmail.com}

\date{\today}

%\subjclass{22E40 (primary); 11E57, 20G30, 51M25 (secondary)}
% Group Theory;  Geometric Topology, Number Theory

\begin{abstract}
We conjecture that for every dimension $n \neq 3$ there exists a noncompact hyperbolic $n$-manifold whose volume is smaller than the volume of any compact hyperbolic $n$-manifold. For dimensions $n \le 4$ and $n = 6$ this conjecture follows from the known results. In this paper we show that the conjecture is true for arithmetic hyperbolic $n$-manifolds of dimension $n\ge 30$.
\end{abstract}

\maketitle

\section{Introduction}
\label{intro}

By \emph{hyperbolic $n$-manifold} we mean a complete orientable manifold
that is locally isometric to the hyperbolic $n$-space $\Hy^n$.
Let $M$ be a complete noncompact hyperbolic $3$-manifold of finite volume. By Thurston's Dehn
surgery theorem there exists infinitely many compact hyperbolic $3$-manifolds
obtained from $M$ by Dehn filling \cite[Section~5.8]{Thur80}. These manifolds
all have their volume smaller than $\vol(M)$ \cite[Theorem~6.5.6]{Thur80}. It is 
known that the Dehn filling procedure is specific to manifolds of dimension $n = 3$
(cf. \cite[Section~2.1]{Anderson06}). We believe that so is the minimality of 
volume of compact manifolds and that for hyperbolic manifolds in other dimensions 
the following conjecture should hold:

\begin{conjecture}
 Let $N$ be a compact hyperbolic manifold of dimension $n\neq 3$. Then
 there exists a noncompact hyperbolic $n$-manifold $M$ whose volume is smaller
 than the volume of $N$.
 \label{conj}
\end{conjecture}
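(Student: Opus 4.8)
As this is stated as a conjecture, no complete proof is expected; the aim is to dispose of the cases that are accessible and to isolate the real obstruction. For $n=2$ the Gauss--Bonnet formula $\vol = 2\pi\lvert\chi\rvert$ already does it: the thrice-punctured sphere has volume $2\pi$, strictly less than the volume $4\pi$ of the smallest closed orientable hyperbolic surface. For $n=4$ and $n=6$ one combines the Gauss--Bonnet--Chern proportionality $\vol = c_n\lvert\chi\rvert$ with the known lower bounds on the Euler characteristic of a closed hyperbolic $n$-manifold and with small cusped examples; as noted in the introduction, the conjecture is known in these dimensions. (The dimension $n=3$ is excluded precisely because Dehn filling yields closed manifolds of strictly smaller volume.)

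For the remaining dimensions I would restrict to \emph{arithmetic} hyperbolic $n$-manifolds ($n\ge 5$, $n\ne 6$) and argue in four steps. \textbf{(1) Classification.} Using the description of arithmetic subgroups of $\PO(n,1)$, every such manifold is commensurable to the one attached to a quadratic form of signature $(n,1)$ over a totally real field $k$ --- together with, for odd $n\ne 7$, the quaternionic type $\mathrm{D}^{\mathrm H}$, and for $n=7$ also the triality type --- and it is compact if and only if $k\ne\Q$: an isotropic form of signature $(n,1)$ exists over $\Q$ (Meyer's theorem forbids anisotropy for forms in at least five variables), while over a proper extension one can choose the form anisotropic. \textbf{(2) Lower bound, compact side.} By Borel--Prasad the volume of an arithmetic $n$-manifold is at least the covolume of a maximal arithmetic lattice in its commensurability class, which is given by Prasad's volume formula, made explicit for these types by Belolipetsky (and Belolipetsky--Emery in the odd case). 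In that formula the field enters through a factor comparable to $d_k^{\,\kappa n^2}$ times bounded products of $\zeta_k$- and $L$-values, so by the Odlyzko discriminant bounds the $k\ne\Q$ covolume is larger than the $k=\Q$ one by a factor growing like $A^{n^2}$ for an explicit $A>1$. \textbf{(3) Upper bound, noncompact side.} Over $\Q$ one exhibits one concrete cusped arithmetic hyperbolic $n$-manifold --- a torsion-free subgroup of $\PO(f;\Z)$ for a well-chosen isotropic integral form $f$, or a manifold from the Ratcliffe--Tschantz family --- whose volume is the minimal noncompact arithmetic covolume (again evaluated by Prasad's formula at $\Q$) times the index needed to pass to a torsion-free subgroup, the whole bounded above explicitly. \textbf{(4) Comparison.} The two bounds are explicit functions of $n$, roughly of the shape $(\text{const})^{n^2}$; tracking the constants shows that the compact lower bound exceeds the noncompact upper bound for every $n\ge 30$, which is the arithmetic statement proved in this paper. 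One checks separately that the quaternionic and triality types only enlarge the covolume, so they cause no trouble.

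\textbf{Main obstacle.} The genuine gap --- and the reason the statement remains a conjecture --- is the \emph{non-arithmetic} closed manifolds in the intermediate range $5\le n\le 29$, $n\ne 6$ (and, for the full conjecture, in all dimensions): there is at present no lower bound on the volume of an arbitrary closed hyperbolic $n$-manifold, in these dimensions, that beats the smallest cusped volume. Gromov--Piatetski-Shapiro hybrids are built from arithmetic blocks and so inherit the arithmetic bounds, but nothing excludes a hypothetical sporadic small closed non-arithmetic manifold; closing the conjecture would require either a classification-free volume estimate or a proof that minimal-volume closed hyperbolic manifolds are arithmetic, both far out of reach. A lesser but still substantial difficulty, internal to the arithmetic case, is making the constants in Prasad's formula --- and the choice of comparison cusped manifold --- sharp enough to bring the crossover dimension down to $30$.
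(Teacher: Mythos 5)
Your proposal correctly identifies that Conjecture~1 is, as stated, a conjecture, and the dispositions you give for $n=2,4,6$ (noncompact examples with $|\chi|=1$ versus the parity of $\chi$ for closed manifolds) and the identification of the non-arithmetic case as the genuine obstruction all match the paper's framing in the Introduction. Your four-step arithmetic plan is also aligned in outline with what the paper actually does for $n\ge 30$: classify via totally real fields, get explicit covolumes from Prasad's formula using the results of \cite{Belo04,BelEme}, construct a noncompact manifold, and compare. But two pieces of the paper's argument are absent, and without them the comparison in your step~(4) would not close at $n=30$.

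First, step~(3) is underspecified where it most matters. You say the volume of the cusped manifold is the minimal noncompact orbifold covolume ``times the index needed to pass to a torsion-free subgroup, the whole bounded above explicitly,'' but you do not say how that index is controlled. If one controls it with Minkowski's lemma (kernel of reduction mod~$3$), the paper notes explicitly that the direct comparison only succeeds for $n\ge 50$ --- far short of the claim. The crucial technical device is Lemma~\ref{lemma:p-torsion}: for a parahoric $P_v$, the preimage $P_v^*$ of a $p$-Sylow of $\MM_v(\f_v)$ has only $p$-power torsion; applying this simultaneously at $v=2$ and $w=3$ and using strong approximation yields a torsion-free $\Lambda_P^{2,3}\subset\Lambda_P$ whose index $[P_2:P_2^*][P_3:P_3^*]$ is much smaller than a mod-$3$ congruence index. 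That construction is what brings the direct threshold down to roughly $n\ge 33$--$34$.

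Second, to reach $n=30,31,32$ the paper does not rely on the crude comparison at all but on arithmetic rigidity of the Euler characteristic. In even dimensions $32$ and $30$ it computes the exact rational number $\chi(O^n)$ of the minimal compact orbifold, reads off its (huge) denominator $D$, and observes that any manifold cover of $O^n$ must have degree divisible by $D$, pushing its volume far above $\vol(M^n)$; it then runs through the finitely many other commensurability classes with small covolume. In the odd dimension $31$, where $\chi=0$, it passes to a totally geodesic $30$-dimensional suborbifold $O_1^{30}$ of $O^{31}$ (cutting the quadratic form down by one variable, with care about the integral lattice) and bounds the cover degree by the denominator of $\chi(O_1^{30})$ via the elementary index estimate $[\Gamma':\Gamma_M\cap\Gamma']\le[\Gamma:\Gamma_M]$. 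Neither of these refinements appears in your outline; they are not optional polish but exactly what gets the statement from $n\ge 33$ to $n\ge 30$. Finally, a small caution on your step~(1): ``compact iff $k\ne\Q$'' is correct for the quadratic-form type, but the quaternionic $\tD$-type is cocompact even over $\Q$; this is handled in \cite{Belo04,BelEme} by showing those types have larger covolume, which you note but should not fold into the Meyer's-theorem dichotomy.
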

The set of volumes of hyperbolic $n$-manifolds is well-ordered (indeed, discrete if $n\neq 3$),
thus the conjecture states that the smallest complete hyperbolic $n$-manifold  is noncompact 
for $n \neq 3$. Conjecture \ref{conj} is known to be true for dimensions $n=2$, $4$ and $6$: 
for these $n$ there exist noncompact hyperbolic $n$-manifolds $M$ with
$|\chi(M)| = 1$ \cite{RatTsch00, ERT10}, whereas it is a general fact that the Euler
characteristic of a compact hyperbolic manifold is even
(cf. \cite[Theorem~4.4]{Ratcliffe2001899}).

In this paper we prove the conjecture for \emph{arithmetic} hyperbolic $n$-manifolds of
sufficiently large dimension:
\begin{theorem}
  Conjecture \ref{conj} holds for arithmetic hyperbolic $n$-manifolds of dimension $n \ge 30$.
  \label{thm:conj-ok}
\end{theorem}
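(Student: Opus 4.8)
The plan is to compare, in each dimension $n \ge 30$, the minimal volume of a compact arithmetic hyperbolic $n$-manifold with the volume of some explicit noncompact arithmetic example. Both quantities are governed by the Prasad volume formula, so the strategy is to write down a good lower bound for the compact side and a good upper bound (ideally an exact value) for the noncompact side, and then show the former exceeds the latter for all $n \ge 30$.

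For the compact side I would invoke the work of Belolipetsky (and Belolipetsky–Emery) on minimal-volume arithmetic hyperbolic orbifolds: the smallest compact arithmetic hyperbolic $n$-orbifold is obtained from a specific quasi-split (but anisotropic) form of $\mathrm{Spin}_{n+1}$ over a totally real field, and Prasad's formula gives its covolume as a product of a field-theoretic factor (a power of the discriminant, times special values of Dedekind zeta and $L$-functions of the defining quadratic extension) and an archimedean factor depending only on $n$. Since a compact hyperbolic $n$-manifold is a finite cover of this orbifold, its volume is at least the orbifold's covolume (times the smallest possible index, but for a clean statement one can just use the orbifold covolume as a lower bound). The key point is that the compactness forces the defining field to be strictly larger than $\Q$ — one cannot use $\Q$ because over $\Q$ the relevant quadratic form would be isotropic in these dimensions — and this forces the discriminant factor to be bounded below by $d_k^{(\text{something growing in }n)}$ with $d_k \ge 5$ (e.g.\ $k = \Q(\sqrt 5)$), producing a super-exponentially growing lower bound.

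For the noncompact side, over $\Q$ one has honest cusped arithmetic hyperbolic $n$-manifolds coming from the standard quadratic form of signature $(n,1)$; their covolumes are again computed by Prasad's formula, now with all field-theoretic data coming from $\Q$ and from $\Q(\sqrt{-1})$ or a similarly small imaginary quadratic field, so the covolume of the minimal such orbifold is a completely explicit function of $n$ (essentially a product $\prod_{i} \zeta(2i)\cdot(\text{L-values})$ times an archimedean term). Passing from orbifold to manifold costs only a bounded factor (the order of a suitable arithmetic group's torsion / the index of a neat subgroup), which one can absorb. So one gets an explicit noncompact manifold of volume at most some explicit $V_{\mathrm{nc}}(n)$.

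**The decisive inequality.** The heart of the argument is then the asymptotic/explicit comparison $V_{\mathrm{nc}}(n) < V_{\mathrm{cpt}}^{\min}(n)$ for all $n \ge 30$. Both sides share the same archimedean Prasad factor (it depends only on the group $\mathrm{Spin}_{n+1}$, not on the arithmetic), so those cancel, and the Dedekind zeta / $L$-function factors on both sides are bounded between explicit constants (using $\zeta_k(s) > 1$ type bounds and standard estimates for $L$-values at integers). What remains is a race between the forced discriminant growth on the compact side — a quantity like $d_k^{\,c\,n^2}$ or $d_k^{\,c\,n^2}\cdot D_{\ell/k}^{\,c\,n^2}$ with $d_k\ge 5$ — and the fixed-field factors on the noncompact side. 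Since the compact side grows super-exponentially in $n$ while the noncompact side grows only like a product of zeta values (hence sub-exponentially), the inequality must eventually hold; the real work is to pin down the crossover and verify it is $\le 30$. The main obstacle, and where I expect most of the effort to go, is precisely this last bookkeeping step: obtaining clean enough lower bounds for the compact minimal covolume (one must handle both the totally real field $k$ and the quadratic extension $\ell/k$ entering the spinor norm, and rule out small-discriminant fields one by one in the borderline dimensions) and clean enough upper bounds for the noncompact volume, sharp enough that the threshold comes out at $30$ rather than some larger number — and then double-checking the low-dimensional boundary cases by hand.
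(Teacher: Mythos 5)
Your overall plan --- pit a compact minimal-volume lower bound against an explicit noncompact upper bound via Prasad's formula --- is indeed the paper's starting point, but several of your simplifications break precisely where the theorem gets its bound of $30$.

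First, the archimedean Prasad factor does \emph{not} cancel between the two sides. It enters raised to the degree of the defining field: the noncompact form lives over $\Q$ (degree $1$), while the minimal compact form lives over $\Q(\sqrt5)$ (degree $2$), so the compact side carries an extra full factor $C(r) = \prod_{j\le r}(2j-1)!/(2\pi)^{2j}$. This surviving factor grows like $\exp(c\,r^2\log r)$ and is in fact indispensable: the discriminant power $5^{r^2+r/2}$ alone is \emph{not} enough to beat the index of the torsion-free subgroup on the noncompact side, which is of size $\asymp 36^{r^2/2}$ in the paper's two-prime construction (and worse with plain Minkowski reduction mod $3$). Your heuristic ``super-exponential discriminant vs.\ sub-exponential zeta factors'' misidentifies where the growth comes from on both sides.

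Second, ``passing from orbifold to manifold costs only a bounded factor'' is not true. There is no uniform bound on the index of a torsion-free subgroup as $n$ grows; the paper constructs an explicit torsion-free cover by a refinement of Minkowski's lemma applied at two small primes (their Lemma~\ref{lemma:p-torsion}), and the resulting index is a specific super-exponential quantity that must be carried through the comparison, not absorbed.

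Third, and most importantly, ``one can just use the orbifold covolume as a lower bound'' fails in exactly the range that matters. With the explicit $M^n$ that the paper builds, the direct inequality $\vol(O^n) > \vol(M^n)$ holds only for $n\ge 33$; for $n = 30, 31, 32$ the compact \emph{orbifold} is strictly smaller than the noncompact manifold (e.g.\ $|\chi(O^{32})|\approx 8.8\cdot10^{217}$ vs.\ $|\chi(M^{32})|\approx 2.4\cdot10^{228}$). To reach $n\ge 30$ one must lower-bound the degree of any manifold cover of these small compact orbifolds. The paper's key device here, which is absent from your proposal, is arithmetic: the orbifold Euler characteristic $\chi(O^n)$ is a rational number whose (enormous) denominator $D$ divides the index of any torsion-free subgroup, since the Euler characteristic of a closed even-dimensional manifold is an even integer. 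In even dimensions $30$ and $32$ this denominator is $\sim10^{48}$--$10^{56}$, which immediately pushes all manifold covers above $\vol(M^n)$. In dimension $31$, where $\chi$ vanishes, the paper passes to a totally geodesic $30$-dimensional suborbifold and applies the same denominator argument to its $\chi$, using the index-restriction observation of their diagram~\eqref{eq-degree-bound}. One must also rule out, case by case, the finitely many other maximal arithmetic subgroups (other defining fields, other lambda factors) whose covolume falls below $\vol(M^n)$. Without this Euler-characteristic-denominator idea your argument stalls at $n\ge 33$ rather than $n\ge 30$.
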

A folklore conjecture states that the smallest volume is always attained on an arithmetic
hyperbolic $n$-manifold (in both compact and noncompact cases). We will refer to this statement as the 
\emph{minimal volume conjecture}. This conjecture is obvious for $n = 2$, it was
proved only recently for $n = 3$ \cite{GMM10}, and there is currently no any potential approach to 
resolving this conjecture in higher dimensions. If true,
the minimal volume conjecture together with Theorem~\ref{thm:conj-ok} would imply Conjecture~\ref{conj}
for dimensions $n\ge 30$. However, Conjecture~\ref{conj} is weaker than the minimal volume and we think that
it might be easier to attack it directly.

At this point we would like to mention the related picture for hyperbolic $n$-orbifolds. By \cite{Belo04,BelEme}, 
we know the compact and noncompact minimal volume arithmetic hyperbolic $n$-orbifolds in all dimensions $n\ge 4$. This is 
complemented 
by the previous results for $n=2$, $3$ (see [loc.cit.]). It follows that the smallest arithmetic hyperbolic 
$n$-orbifold is compact for $n = 2$, $3$, $4$ and noncompact for $n\ge 5$. For dimensions $n\le 9$ we do know the 
smallest volume noncompact hyperbolic $n$-orbifolds thanks to the result of Hild's thesis (\cite{Hild07}). It 
turns out that these orbifolds are arithmetic, supporting the orbifold version of the minimal volume conjecture.

Let us now describe the contents of the paper. The proof of the main theorem is based on
arithmetic techniques. We start with the minimal volume arithmetic
$n$-orbifold constructed in \cite{Belo04,BelEme}, which is noncompact
for $n\ge 5$. Then we construct a manifold cover of it and try to show
that the volume of this manifold is still smaller than the smallest compact arithmetic
$n$-orbifold, which is also known from our previous work. 
%Here comes the
%first challenge: The degree of the manifold covers grows rapidly with
%the dimension and a direct approach via taking torsion-free congruence
%kernels does not allow us to obtain any reasonable bounds for the
%dimension.
Constructing a manifold cover of an orbifold $\Gamma \bs \Hy^n$ is
equivalent to finding a torsion-free subgroup of $\Gamma$. 
Our method here is based on Lemma~\ref{lemma:p-torsion},
which can be thought of as a variant of Minkowski's lemma that asserts
that the kernel of $\GL_N(\Z) \to \GL_N(\Z/m)$ is torsion-free for $m>2$.
Minkowski's lemma is the classical tool to construct hyperbolic
manifolds from arithmetic subgroups (see for instance \cite[Section
4]{Ratcliffe2001899}). We observe that applying
Lemma~\ref{lemma:p-torsion} on two small primes (see Section
\ref{sec:method-details}) gives better results. In particular we show 
in Section~\ref{noncompact-manifold}  how our method applies to the 
problem and proves Theorem~\ref{thm:conj-ok} for dimensions $n\ge 33$.
On the other hand, Minkowski's lemma (with $m = 3$) cannot be used to obtain a direct
proof for $n < 50$. 

%torsion in congruence subgroups, and an important observation that
%taking the intersection of two subgroups that correspond to different
%small primes gives a torsion-free subgroup whose index is smaller than
%the index of a torsion-free subgroup that corresponds to a single
%sufficiently large prime. The details of the method are explained in
%Section~\ref{method-constr}. In Section~\ref{noncompact-manifold} we
%show how the method applies to the problem and prove
%Theorem~\ref{thm:conj-ok} for dimensions $n\ge 33$.

The dimension bound in Theorem \ref{thm:conj-ok} can be further improved because the compact
orbifolds of small volume considered above do have singularities and
hence the volumes of the smallest compact manifolds are larger. In order
to obtain better estimates for these volumes we use the arithmetic
information encoded in the Euler characteristic. This approach works
well for even dimensions (see Sections~\ref{sec:n=32} and
\ref{sec:n=30}). In odd dimensions the Euler characteristic is zero, but
we can still use a similar method. In order to do so we consider even
dimensional totally geodesic suborbifolds of the compact odd dimensional
orbifolds of small volume. The details of the argument are explained in
Section~\ref{sec:odd-dim-trick} and its application to the case $n=31$
is in Section~\ref{sec:n=31}. It is possible to use similar
considerations for smaller dimensions but the number of variants and the
computational difficulty increase rapidly for smaller $n$. The problem
becomes very difficult for $n<10$ and we expect that some new ideas or
significant computational advances are required in order to extend our
result to this range.

\medskip

\noindent\textbf{Acknowledgments.} Part of this work was done while the second author was visiting IMPA (Rio de Janeiro, Brazil). He wishes to thank the institute for the hospitality and support.

\section{A method of construction}
\label{method-constr}

\subsection{}
\label{sec:defns}

Let $\GG$ be a semisimple algebraic group defined over a number field
$k$. We denote by $\Vf$ the set of finite places of $k$. For each $v \in
\Vf$, we denote by $k_v$, $\O_v$ and $\f_v$ the completion, local ring
and residue field defined by $v$. We suppose that
$\GG$ is simply connected as an algebraic group.
By definition, a \emph{principal arithmetic subgroup} of $\GG(k)$
is a subgroup $\Lambda_P = \GG(k) \cap \prod_{v \in \Vf}
P_v$, where $P = (P_v)_{v\in\Vf}$ is a coherent collection of parahoric
subgroups $P_v \subset \GG(k_v)$ (see \cite{BorPra89}). By Bruhat-Tits
theory (see \cite{Tits79}), for each $v \in \Vf$, there exists a smooth
connected $\O_v$-group scheme $\PP_v$ with generic fiber $\GG$
and such that  $\PP_v(\O_v) = P_v$.
We denote by $\MM_v$ the $\f_v$-group defined as the  maximal
reductive quotient of $\PP_v / \f_v$.
The reduction map $P_v \to \MM_v(\f_v)$ is surjective. Moreover,
since $\GG$ is simply connected, we have that  $\PP_v/\f_v$ (and thus
$\MM_v$ as well) is connected.

\subsection{}
\label{sec:lemma}

Let $p$ be the rational prime above $v$, i.e., $p$ is the characteristic
of the residue field $\f_v$.
Let us denote by $P_v^* \subset P_v$ the pre-image under the reduction map of a
$p$-Sylow subgroup of $\MM_v(\f_v)$.

\begin{lemma}
  Each torsion element in $P_v^*$ has an order of the form $p^s$.
  \label{lemma:p-torsion}
\end{lemma}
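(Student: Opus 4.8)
The plan is to analyze the structure of $P_v^*$ via the reduction map $\pi\colon P_v \to \MM_v(\f_v)$. By definition $P_v^* = \pi^{-1}(S)$ where $S$ is a $p$-Sylow subgroup of $\MM_v(\f_v)$. The kernel $K = \ker(\pi) \cap P_v^* = \ker(\pi)$ is a pro-$p$ group: indeed, $K$ is the group of $\O_v$-points of the kernel of $\PP_v \to \PP_v/\f_v$ reducing into the smooth connected $\O_v$-group scheme, so it is the projective limit of the finite groups $\ker\bigl(\PP_v(\O_v/\mathfrak{m}^{j+1}) \to \PP_v(\O_v/\mathfrak{m}^j)\bigr)$, each of which is a $p$-group (it is a quotient of successive layers that are $\f_v$-vector spaces, hence elementary abelian $p$-groups). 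Thus we have a short exact sequence $1 \to K \to P_v^* \to S \to 1$ with $K$ pro-$p$ and $S$ a finite $p$-group.

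First I would make the pro-$p$-ness of $K$ precise, citing Bruhat–Tits theory and the smoothness of $\PP_v$ for the filtration argument (this is essentially the standard fact that the congruence kernel of a smooth group scheme over $\O_v$ is pro-$p$). Then I would take an arbitrary torsion element $x \in P_v^*$, say of order $m$, and write $m = p^s \cdot m'$ with $\gcd(m', p) = 1$; the goal is to show $m' = 1$. Passing to the image $\pi(x) \in S$, which is a $p$-group, we get that $\pi(x)^{p^s} = 1$, so $y := x^{p^s}$ lies in $K$ and has order dividing $m' $ (in fact order exactly $m'$ since $\mathrm{ord}(x) = m$). But $y$ is a torsion element of a pro-$p$ group of order coprime to $p$; since a pro-$p$ group has no nontrivial torsion of order prime to $p$ (the closure of $\langle y \rangle$ would be a finite $p$-group containing an element of order $m'$), we conclude $y = 1$, hence $m \mid p^s$, i.e., $m = p^s$.

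The main obstacle, such as it is, is pinning down the assertion that $K = \ker(\pi)$ is pro-$p$ rather than just pro-(something). This rests on two inputs: that $P_v = \PP_v(\O_v)$ for a \emph{smooth} group scheme $\PP_v$, which is exactly what Bruhat–Tits theory provides and is recorded in Section~\ref{sec:defns}; and that the reduction map $\PP_v(\O_v) \to \MM_v(\f_v)$ has kernel equal to the kernel of $\PP_v(\O_v) \to (\PP_v/\f_v)(\f_v)$ composed with the (split) surjection onto the maximal reductive quotient — here one uses that $\PP_v/\f_v$ is connected, again because $\GG$ is simply connected, so that the unipotent radical contributes only a $p$-group and the reductive-quotient map on $\f_v$-points is surjective with unipotent (hence $p$-power order) kernel. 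Once the pro-$p$ claim is in hand, the rest is the elementary group-theoretic argument above. I would organize the write-up as: (1) recall $K$ is pro-$p$; (2) the exact sequence $1 \to K \to P_v^* \to S \to 1$; (3) the two-line deduction that torsion in $P_v^*$ has $p$-power order.
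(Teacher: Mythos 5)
Your proof is correct and follows essentially the same route as the paper: both exhibit $P_v^*$ as an extension of a finite $p$-group by a pro-$p$ kernel (the paper factors through $\PP_v(\f_v)$, observing that the image of $P_v^*$ there is an extension of the Sylow $S$ by the unipotent radical and hence a $p$-group, while you lump the unipotent part into the kernel of $P_v \to \MM_v(\f_v)$), and then deduce that torsion must have $p$-power order. The only cosmetic differences are that the paper reduces to elements of prime order and cites \cite[Lemma~3.8]{PlaRap94} for the pro-$p$ property of the congruence kernel, whereas you treat a general order $m = p^s m'$ directly and sketch the congruence-filtration argument yourself.
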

\begin{proof}
  Let $K_v$ be the kernel of the map $P_v \to \PP_v(\f_v)$.  Let $g \in P_v^*$
  be an element of finite order $q$, and denote by $\overline{g}$ its
  image in $\PP_v(\f_v)$. It suffices to consider the case where $q$ is
  prime. The image of $P^*_v$ in $\PP_v(\f_v)$ is, by definition, the
  extension of a $p$-group by a unipotent $\f_v$-group. It follows that
  the image of $\P^*_v$ is itself a $p$-group, 
  and thus we 
  have either $\overline{g} = 1$, or $q=p$. But in the
  first case $g$ is contained in $K_v$, which is a pro-$p$ group (see
  \cite[Lemma 3.8]{PlaRap94}), and the conclusion follows as well.
\end{proof}

\subsection{}
\label{sec:method-details}

By choosing two primes $v$ and $w$ above two distinct rational
primes, and replacing $P_v$ (resp. $P_w$) by $P_v^*$ (resp. $P_w^*$)
in the coherent collection $P$, we obtain by Lemma \ref{lemma:p-torsion}
a torsion-free
subgroup $\Lambda^{v,w}_P \subset \Lambda_P$. By strong approximation of
$\GG$,
the index $[\Lambda_P : \Lambda^{v,w}_P]$ equals $[P_v: P^*_v] \cdot
[P_w:P_w^*]$.

By construction, the index $[P_v:P_v^*]$ is equal to the order of
$\MM_v(\f_v)$ divided by its highest $p$-factor. We will usually work with
parahoric subgroups for which $\MM_v$ is semisimple (for
instance, $P_v$ hyperspecial), and in this case
$[P_v:P_v^*]$ can be easily computed from the type of $\MM_v$ using the
list of orders of simple groups over a finite field (see \cite{Ono66}).

For example, if $\GG / k_v$ is a split group of type $\tB_r$ and  $P_v \subset
\GG(k_v)$ is hyperspecial, then $\MM_v$ is semisimple of the same type
and we obtain:
\begin{align}
  [P_v:P_v^*] &= \prod_{j=1}^r (q_v^{2j} - 1),
  \label{eq:index-Br}
\end{align}
where $q_v$ denotes the cardinality of the finite field $\f_v$. The product of
two such expressions gives us the index of the torsion-free subgroup
$\Lambda^{v,w}_P$ in $\Lambda_P$. If $\Lambda_P$ acts on $\Hy^n$ we can then
compute the volume of the quotient manifold $\Lambda^{v,w}_P\bs\Hy^n$ as a product of this
index and the covolume of $\Lambda_P$.

\section{Construction of noncompact manifolds}
\label{noncompact-manifold}

We will construct noncompact hyperbolic manifolds starting from the
arithmetic lattices of minimal covolume considered in \cite{Belo04,BelEme}.
They are best understood as
normalizers of principal arithmetic subgroups of $G = \Spin(n,1)$, the
latter Lie group being a two-fold  covering of $\Isom^+(\Hy^n)$.

\subsection{}

We showed in \cite[Section 2.1]{BelEme} that if a lattice $\Gamma \subset G$
contains
the center $Z$ of $\Spin(n,1)$, then the hyperbolic volume $\vol(\Gamma\bs
\Hy^n)$ is given by $2 \vol(S^n) \mu(\Gamma \bs G)$, where $S^n$ is the
$n$-sphere of constant curvature $1$ and $\mu$ is the normalization of
the Haar measure on $G$ that was used by Prasad in \cite{Pra89}, and that the 
volume is the half of this value otherwise. Suppose now that $\Gamma \subset G$ 
is torsion-free. Since $Z$ has order $2$ in all dimensions, we obtain in 
this case:
\begin{align}
  \vol(\Gamma \bs \Hy^n) &=  \vol(S^n) \mu(\Gamma \bs G)
  \label{eq:hyperb-vol-measure}
\end{align}
%The Lie group $G$ is of type $\tB_r$ with $r = n / 2$ if $n$ is odd. If
%$n$ is odd it has type $\tD_r$ with $r = (n+1)/ 2$. The center $Z(n)
%\subset G$ is then described according to $r$, and for its order we have
%\begin{align}
  %|Z(n)| &= \left\{
    %\begin{array}[h]{ll}
      %4 & \mbox{if } n \equiv 3 (4);\\
      %2 & \mbox{otherwise}.
    %\end{array} \right.
  %\label{eq:centers}
%\end{align}
If $n$ is even, then $2 \mu(\Gamma\bs G) = |\chi(\Gamma)|$
(cf. \cite[Section 4.2]{BorPra89}) and
equation \eqref{eq:hyperb-vol-measure} is
equivalent to the Gauss-Bonnet formula (note that a torsion-free
$\Gamma \subset G$ is isomorphic to -- and thus has same Euler characteristic as
-- its image in $\Isom^+(\Hy^n)$):
\begin{align}
  \vol(\Gamma\bs \Hy^n) &= \frac{\vol(S^n)}{2} |\chi(\Gamma)|,\ n \text{ even.}
  \label{eq:Gauss-Bonnet}
\end{align}
We recall that the volume of the $n$-sphere of curvature $1$ is given by the following formula:
\begin{align}
  \vol(S^n) = \frac{2 \pi^{\frac{n+1}{2}}}{\Gamma(\frac{n+1}{2})},
  \label{eq:vol-sphere}
\end{align}
where $\Gamma(-)$ is the Gamma function (recall that for an integer $m$ we
have $\Gamma(m) = (m-1)!$).

\subsection{}

Let $\Lambda_P \subset \Spin(n,1)$ be the (unique) nonuniform principal
arithmetic subgroup whose normalizer realizes the smallest covolume.
Like all nonuniform arithmetic hyperbolic lattices it is defined
over $k = \Q$, so that the set $\Vf$ of finite places corresponds to
the rational primes.  The structure of the coherent collection $P$ was
determined in \cite{Belo04,BelEme}. We denote by $\GG$ the algebraic
$\Q$-group containing $\Lambda_P$, and we will use the notation
introduced in Section~\ref{method-constr}.

\subsection{}

Let us first deal with  the case when $n = 2r$ is even. Then $\GG$ is of type $\tB_r$
and the coherent collection $P$
determining $\Lambda_P$ has the following structure (see \cite{Belo04}):
$P_v$ is hyperspecial
unless $v = 2$ and $r \equiv 2,3\; (4)$, in which case the associated
reductive $\F_2$-group $\MM_2$ is semisimple of type $^2\tD_r$. By
Prasad's volume formula~\cite{Pra89}, the Euler characteristic of $\Lambda_P$ is then
given by
\begin{align}
  |\chi(\Lambda_P)| &= 2 \lambda_2(r)
  C(r) \prod_{j=1}^r \zeta(2j),
  %\prod_{j=1}^r\frac{|B_{2j}|}{4j},
  \label{eq:EulerCh-noncompact-Cr}
\end{align}
with $\lambda_q(r) = 1$ if $r \equiv 0,1\;(4)$ and $\lambda_q(r) =
q^r - 1$ otherwise, and the constant $C(r)$ given by
\begin{align}
  C(r) &= \prod_{j=1}^r \frac{(2j-1)!}{(2\pi)^{2j}}.
  \label{eq:Cr}
\end{align}
Using the functional equation for the Riemann zeta function, and expressing its value at
negative integers through Bernoulli numbers, we can rewrite
\eqref{eq:EulerCh-noncompact-Cr} as follows:
\begin{align}
  |\chi(\Lambda_P)| &= 2 \lambda_2(r)
  \prod_{j=1}^r\frac{|B_{2j}|}{4j}.
  \label{eq:EulerCh-noncompact-Bern}
\end{align}
\subsection{}

We can apply the construction presented in Section \ref{method-constr}
to $\Lambda_P$ with $v = 2$ and $w = 3$. We find that for all $r$, we
have:
\begin{align}
  \lambda_2(r) [P_2: P_2^*] &= \prod_{j=1}^r (2^{2j} -1),
  \label{eq:coinc}
\end{align}
the case of trivial $\lambda_2(r)$ being just the computation in
\eqref{eq:index-Br}. Let $\Gamma$ be the (isomorphic) image of
$\Lambda_P^{2,3}$ in $\Isom^+(\Hy^n)$ and $M^n = \Gamma\bs \Hy^n$ the
corresponding quotient manifold, which by the construction is 
noncompact and arithmetic.
From the previous computation we have:
\begin{align}
  |\chi(M^{2r})| &= 2 \prod_{j=1}^r (4^j-1)(9^j-1) \frac{|B_{2j}|}{4j}
  \label{eq:noncompt-final}
  \\
  &= 2 C(r) \prod_{j=1}^r (4^j-1)(9^j-1) \zeta(2j)
  \label{eq:noncompt-final-Cr}.
\end{align}

\begin{table}
\def\arraystretch{1.1}
  \centering
  \begin{tabular}{r|r}
   \hline 
    $n$ & $|\chi(M^{2r})|$\\ \hline
   4 & 10\\
   6 & 910\\
   8 & 3171350\\
10 & 725639764850\\
12 & 16654568229539490250\\
14 & 54376724439679967985482572750\\
16 & 33998109351372684068956597092378802073750\\
18 & 5272397653068183031816584035192902513000228940543011250\\
%20 & 254994268996845725225855875260111112848653958492900597997560102795906250\\
   \hline
  \end{tabular}
  \vspace{0.5cm}
  \caption{Euler characteristic of noncompact manifolds $M^n$}
  \label{tab:Euler-char}
\end{table}

\subsection{}
We now construct noncompact hyperbolic manifolds in odd dimensions $n>3$.
Let $r = (n+1) / 2$. Then the $\Q$-group $\GG$ containing $\Lambda_P$
has type $\tD_r$. It is an inner form (type $^1\tD_r$) unless $r$ is
even (i.e., $n \equiv 3\; (4)$), in which case it has type $^2\tD_r$ and
becomes inner over $\ell = \Q(\sqrt{-3})$. The details of the construction
are given in \cite{BelEme}, here we only briefly recall the relevant
parts.

Let us first suppose that $r$ is odd. Then $P_v$ is hyperspecial unless
$v = 2$ and $r \equiv 3\;(4)$. In the latter case the radical of
$\MM_2$ is a nonsplit one-dimensional torus, and the semisimple part of
$\MM_2$ has type $^2\tD_{r-1}$. If we let
\begin{align}
  \lambda'_q(r) = \frac{(q^r-1)(q^{r-1}-1)}{q+1}
  \label{eq:lambda-odd}
\end{align}
for $q = 2$ with $r \equiv 3\; (4)$ and $\lambda'_q(r) = 1$ otherwise,
we find that in all cases with odd $r$ we have
\begin{align}
  \lambda'_{q_v}(r) [P_v:P_v^*]  &= (q_v^r-1) \prod_{j=1}^{r-1}(q_v^{2j}-1).
  \label{eq:index-r-odd}
\end{align}
Moreover, Prasad's formula together with
equations \eqref{eq:hyperb-vol-measure} and \eqref{eq:vol-sphere} shows
that we have (with $C(-)$ defined in \eqref{eq:Cr}):
\begin{align}
  \vol(\Lambda_P) &= \vol(S^n) \lambda'_2(r) \frac{(r-1)!}{(2\pi)^r} \zeta(r)
  \nonumber
  \prod_{j=1}^{r-1} \frac{(2j-1)!}{(2\pi)^{2j}} \zeta(2j)\\
  &= \lambda'_2(r) \frac{C(r-1)}{2^{r-1}} \zeta(r) \prod_{j=1}^{r-1} \zeta(2j)\\
  &=  \lambda'_2(r) \zeta(r) \prod_{j=1}^{r-1} \frac{|B_j|}{8j}.
  \label{eq:volO-r-3}
\end{align}
Thus in this case we obtain a manifold $M^n = \Lambda_P^{2,3}\bs \Hy^n$, $n=2r-1$, $r$ odd, whose
volume is given by
\begin{align}
  \vol(M^{n}) &= \zeta(r)(2^r-1)(3^r-1)
  \prod_{j=1}^{r-1}\frac{(4^j-1)(9^j-1)}{8j}|B_{2j}| .
  \label{eq-vol-manif-r-odd}
\end{align}

Let us finally consider the remaining case $n = 2r-1$ with $r$ even. In
this case all parahoric subgroups $P_v$ in the collection $P$ are
hyperspecial with the only exception of $v = 3$ (which ramifies in
$\ell = \Q(\sqrt{-3})$), where $P_v$ is special.
Since $2$ does not split in $\ell$, the group $\GG$ is
an outer form over $\Q_2$ and the $\F_2$-group $\MM_2$ associated to
the hyperspecial parahoric $P_2$ is semisimple of type $^2\tD_r$. Thus
we have:
\begin{align}
  [P_2:P_2^*] &= (2^r + 1) \prod_{j=1}^{r-1}(2^{2j}-1).
  \label{index-2-r-even}
\end{align}
For the place $v = 3$, $\MM_3$ is semisimple of type $\tB_{r-1}$, from
which we get
\begin{align}
  [P_3:P_3^*] &= \prod_{j=1}^{r-1} (3^{2j}-1).
  \label{index-3-r-even}
\end{align}
Using Prasad's formula to compute the covolume of $\Lambda_P$,  we
finally  deduce that the manifold $M^n = \Lambda_P^{2,3}\bs \Hy^n$, $n=2r-1$, $r$ even, has volume
\begin{align}
  \vol(M^{2r-1}) &= 3^{r-1 / 2} (2^r + 1) \zeta_\ell(r) / \zeta(r)
  \prod_{j=1}^{r-1} \frac{(4^j-1)(9^j-1)}{8j} |B_{2j}|.
  \label{eq-vol-manif-r-even}
\end{align}

\subsection{}

Formulas \eqref{eq-vol-manif-r-odd} and \eqref{eq-vol-manif-r-even} allow us to 
evaluate the volume of the constructed manifolds $M^n$ using
\textsc{Pari/GP}. Moreover, for $n$ even the volume can be deduced from
formula \eqref{eq:noncompt-final} together with \eqref{eq:Gauss-Bonnet}.
We list the values obtained for dimensions $n\le20$ in Table~\ref{tab:volumes-noncompact}.
It shows, in particular, that the volume grows rapidly with the dimension.
\begin{table}
  \def\arraystretch{1.1}
  \centering
  \begin{tabular}{r|r}
   \hline
   $n$ & $\vol(M^n)$ \\ \hline
   4 & 131.594 \\
   5 & 273.467 \\
   6 & 15048.379 \\
   7 & 42504.453 \\
   8 & 47073267.939 \\
   9 & 770938537.303 \\
   10 & 7519493827964.029 \\
   11 & 305396253769850.768 \\
   12 &  98579836734072034892.809 \\
   13 &  36424053767874477954431.531 \\
   14 & 1.555 E29 \\
   15 & 2.059 E32 \\
   16 & 4.074 E40 \\
   17 & 6.691 E44 \\
   18 & 2.335 E54 \\
   19 & 1.797 E59 \\
   20 & 3.734 E70 \\
   \hline
  \end{tabular}
  \vspace{0.5cm}
  \caption{Approximate values of the volume of $M^n$}
  \label{tab:volumes-noncompact}
\end{table}

\section{Proof of Theorem \ref{thm:conj-ok} for sufficiently large dimension}
\subsection{}

We now compare the volume of $M^n$ from the previous section with the volume of the smallest compact arithmetic
hyperbolic $n$-orbifold $O^n = \Gamma\bs \Hy^n$. In the rest of the
paper the Euler characteristic $\chi(O^n)$ is to be understood in the
orbifold sense, that is, $\chi(O^n) = \chi(\Gamma)$.

We begin with even dimensions $n = 2r \ge 4$. The orbifold $O^n$ is defined over the field $k = \Q(\sqrt{5})$
and its Euler characteristic is given by
\begin{align}
  |\chi(O^n)| = 4 \cdot \blambda_4(r)  \cdot 5^{r^2 + r/2} C(r)^2
  \prod_{j=1}^r \zeta_k(2j),
  \label{eq:Euler-cpt-orbi}
\end{align}
with $\blambda_q(r) = 1$ if $r$ is even and $\blambda_q(r) = \frac{q^r-1}{2}$ if $r$ is odd \cite{Belo04}.

By \eqref{eq:noncompt-final-Cr} and \eqref{eq:Euler-cpt-orbi}, we have:
\begin{align}  \label{eq:quotient-even1}
  \frac{|\chi(O^n)|}{|\chi(M^n)|} &= \frac{4 \cdot \blambda_4(r)  \cdot 5^{r^2 + r/2} C(r)^2 \prod_{j=1}^r \zeta_k(2j)}
{2 C(r) \prod_{j=1}^r (4^j-1)(9^j-1) \zeta(2j)}.
\end{align}
Using the basic inequalities $\zeta_k(2j) > 1$ and $\prod_{j=1}^r \zeta(2j) < 2$
plus the fact that $\blambda_4(r) \ge 1$ for all $r$, we obtain the lower bound
\begin{align}  \label{eq:quotient-even2}
  \frac{|\chi(O^n)|}{|\chi(M^n)|} &> \frac{5^{r^2 + r/2} C(r)}{\prod_{j=1}^r (4^j-1)(9^j-1)} > \frac{5^{r^2 + r/2} C(r)}{36^{r^2/2+r/2}}.
\end{align}
By Stirling's formula,
$$C(r) = \prod_{j=1}^r \frac{(2j-1)!}{(2\pi)^{2j}} > \prod_{j=1}^r \frac{(2j-1)^{2j-1}}{2\pi(2\pi e)^{2j-1}}.$$
Applying the Euler--Maclaurin summation formula, we obtain
$$\log C(r) \gtrsim \frac14(2r-1)^2\log(2r-1),$$
which is $\gg cr^2$ (for any constant $c$). Hence for $r\gg 0$, we have
\begin{align*}
  \frac{|\chi(O^n)|}{|\chi(M^n)|} &\gg 1.
\end{align*}
In fact, a computation shows that it is enough to take $r\ge 18$ for the ratio in the right hand side of \eqref{eq:quotient-even2} to be $>1$.
A more precise computation of the expression in \eqref{eq:quotient-even1} using \textsc{Pari/GP} shows that
\begin{align*}
  \frac{|\chi(O^n)|}{|\chi(M^n)|} &> 1, \text{ for } n = 2r \ge 34.
\end{align*}
Since $O^n$ has the smallest volume among compact arithmetic hyperbolic $n$-orbifolds,
this proves Theorem \ref{thm:conj-ok} for these values of $n$.

\subsection{}
Now consider the odd dimensional case $n = 2r-1 \ge 5$.

The smallest compact odd dimensional arithmetic orbifold $O^n$ is again defined over $k = \Q(\sqrt{5})$. Its volume is given by
\begin{align}
  \vol(O^n) &=
  \frac{5^{r^2-r/2} \cdot 11^{r-1/2} \cdot (r-1)!}{2^{2r-1} \pi^r} \cdot L_{\ell_0|k}\!(r) \cdot C(r-1)^2 \prod_{j=1}^{r-1} \zeta_{k}(2j),
  \label{eq-vol-cpt-orbi-n-odd}
\end{align}
where  $\ell_0$ is the quartic field with a defining polynomial
$x^4-x^3+2x-1$  and $L_{\ell_0|k} = \zeta_{\ell_0}/\zeta_k$ \cite[Thoerem~1]{BelEme}. 

Note that for any $s \ge 3$ we have $L_{\ell_0|k}(s) \ge 1/\zeta_k(3) > 0.973$. 

By \eqref{eq-vol-manif-r-odd}, \eqref{eq-vol-manif-r-even} and \eqref{eq-vol-cpt-orbi-n-odd}, we have
\begin{align}
  \frac{\vol(O^n)}{\vol(M^n)} &=
  \frac{
  \frac{5^{r^2-r/2}\cdot 11^{r-1/2} \cdot (r-1)!}{2^{2r-1} \pi^r} \cdot L_{\ell_0|k}\!(r) \cdot C(r-1)^2 \prod_{j=1}^{r-1} \zeta_{k}(2j)}
  {A(r) \cdot \frac{C(r-1)}{2^{r-1}} \cdot \prod_{j=1}^{r-1} \zeta(2j)(4^j-1)(9^j-1)},
  \label{eq:quotient-odd1}
\end{align}
where $A(r) = \zeta(r)(2^r-1)(3^r-1)$ if $r$ is odd and $A(r) = 3^{r-1/2}(2^r+1)\zeta_\ell(r)/\zeta(r)$, $\ell = \Q(\sqrt{-3})$, if $r$ is even, and hence
\begin{align}
  A(r) &< 6^r\cdot 2 \text{ for all } r.
\end{align}
Therefore,
\begin{align}
  \frac{\vol(O^n)}{\vol(M^n)} &>
  \frac{5^{r^2-r/2}\cdot 11^{r-1/2} \cdot (r-1)! \cdot C(r-1)}{2^{r} \pi^r 6^r\cdot 4 \cdot \prod_{j=1}^{r-1} (4^j-1)(9^j-1)} \\
  &> \frac{5^{(r-1)(r-1/2)}C(r-1)}{36^{r^2/2-r/2}} \cdot \frac{11^{r-1/2} \cdot (r-1)!}{5^{1/2-r} \cdot 4 \cdot (12\pi)^r}.
\end{align}
Now the first factor has the same form as the ratio in \eqref{eq:quotient-even2}, hence by the previous section it is $>1$ for $r-1 \ge 18$. It is an easy exercise to show that for such $r$, the second factor is $>1$ as well.

We can improve the bound for $r$ by evaluating \eqref{eq:quotient-odd1} using \textsc{Pari/GP}. This gives
\begin{align*}
  \frac{|\chi(O^n)|}{|\chi(M^n)|} &> 1, \text{ for } n = 2r-1 \ge 33.
\end{align*}
Together with the even dimensional part it proves Theorem \ref{thm:conj-ok} for $n \ge 33$.

\section{Lowering the dimension bound}

\subsection{}\label{sec:n=32}
Improving the bound for dimension requires more careful analysis. We will begin again with the even dimensional case.

The first dimension to consider is $n = 32$. Here we have a noncompact
arithmetic manifold $M^n$ with Euler characteristic $|\chi(M^n)| =
2.354\ldots\cdot10^{228}$. Ironically, we call it ``small'', but our
study shows that small manifolds in high dimensions tend to have huge
volume. The smallest compact arithmetic $n$-orbifold $O^n$ has
$|\chi(O^n)| = 8.777\ldots\cdot10^{217}$, which is less than
$|\chi(M^n)|$. Using \textsc{Pari/GP} we can compute (see remark below)
the precise value of $|\chi(O^n)|$, which is a rational number.  Its
denominator equals $$D=
107887196930872715055177987172922818560000000000000000000,$$ which is
$\sim 10^{56}$. A manifold covering of $O^n$ has to have an (even)
integer Euler characteristic, hence its degree is divisible by $D$. It
follows that the volume of the smallest possible manifold cover of $O^n$
is much larger than $\vol(M^n)$.

\begin{rmk}
 A way to compute the exact value of $|\chi(O^n)|$ is to first use the
 functional equation for $\zeta_k$ to  transform
 equation \eqref{eq:Euler-cpt-orbi} into a product of a (known) rational factor 
 and values $\zeta_k(1-2j)$, which are rational. Using \textsc{Pari/GP}
 the numerical values of $\zeta_k(1-2j)$ can be approximated by rationals,
 leading to the number $D$
 (it is enough to work with precision \verb?\p 80?). 
 With this method based on approximation, the value obtained for $D$ can
 only be used as a lower bound for the actual denominator of
 $|\chi(O^n)|$, but this is already sufficient for our purpose. However, a
 cleaner way to proceed is to use the library of special values
 computed by Alvaro Lozano-Robledo, who obtained them through a procedure
 under \textsc{Pari/GP} that computes generalized Bernoulli 
 numbers (see \cite{LozRob}). This guarantees the correctness of the value computed for $D$.
\end{rmk}

In order to complete the discussion for $n=32$ we need to consider other
maximal arithmetic subgroups $\Gamma$ whose covolume is $< \vol(M^n)$.
Recall that any maximal arithmetic subgroup is a normalizer of a
principal subgroup.
Maximal arithmetic subgroups other than the one defining $O^n$ can be either
subgroups defined over $k = \Q(\sqrt{5})$ 
or arithmetic subgroups defined over other totally real fields. 
In the latter case, by \cite{Belo04}, we know that the next
smallest covolume subgroup has the field of definition $\Q(\sqrt{2})$.
By the volume formula, the absolute value of its Euler characteristic is
$>9.071\cdot10^{271}$, which is already bigger than $|\chi(M^n)|$.
Therefore, we are only left with the groups defined over $k$. The
covolume of any such subgroup $\Gamma$ would have an additional lambda factor in
comparison with $\vol(O^n)$. By
\cite[Sections 2.3 and 3.2]{Belo04}, the smallest possible lambda factor 
is $\lambda_v = \frac{q_v^{16}+1}{2}$ for $q_v = 4$,
and it follows that the orbifold corresponding to the principal
arithmetic sugroup $\Lambda$ that $\Gamma$ normalizes has 
an Euler characteristic  of absolute value
$1.884\ldots\cdot 10^{227}$ with denominator $2D$.
Hence all manifold covers of this orbifold are much larger than $M^n$. Now,
the index $[\Gamma : \Lambda]$ can be computed  the same way as it is done in
\cite{Belo04} and \cite{BelEme} and in particular, since the field
$k = \Q(\sqrt{5})$
has class number one, this index is a power of $2$ (whose exponent can
be bounded). Using this the result just obtained immediately extends to 
the manifold covers of $\Gamma\bs \Hy^{32}$.
The same argument excludes the case of orbifolds with a lambda factor
$\lambda_v$ corresponding to $q_v = 5$. 
All other possible lambda factors give rise to orbifolds of volume
larger than $\vol(M^n)$. This shows that Theorem \ref{thm:conj-ok} is
true for $n = 32$.

%\begin{rmk} Strictly speaking, the considerations in the last paragraph
  %had to take into account the fact that maximal arithmetic lattices
  %$\Gamma_m$ may strictly contain principal lattices $\Gamma$ whose
  %volume we control there, meaning that we also need to control the
  %index $[\Gamma_m:\Gamma]$ and enlarge the range of volumes of
  %potential candidates by the upper bound for this index. In our
  %computation here and in the following sections we take advantage of
  %the fact that all the admissible groups $\G$ have the same structure
  %as the groups giving rise to the minimal covolume arithmetic subgroups
  %and hence the related index can be computed the same way as in
  %\cite{Belo04} and \cite{BelEme}. We will not repeat the details of the
  %related estimation and computation here. In smaller dimensions the
  %range of admissible groups is larger and one has to take much more
  %care about this index.  \end{rmk}

\subsection{}\label{sec:n=30} 
Dimension $n = 30$ is treated in a similar
way. Here we have a noncompact arithmetic manifold $M^n$ with
$|\chi(M^n)| = 1.252\ldots\cdot10^{195}$. The smallest volume compact
arithmetic orbifold $O^n$ has $|\chi(O^n)| = 8.112\ldots\cdot10^{187}$
with denominator 
$~5.231\ldots\cdot10^{48}$,
%$~4.243\ldots\cdot10^{236} > |\chi(M^n)|$,
hence its
manifold covers are larger than $M^n$. The smallest covolume cocompact
arithmetic subgroup defined over $k \neq \Q(\sqrt{5})$ has the field of
definition $\Q(\sqrt{2})$ and $|\chi| > 3.116\cdot10^{235}$, which is
bigger than $|\chi(M^n)|$. For the other maximal arithmetic subgroups
defined over $\Q(\sqrt{5})$ we have a different lambda factor in the
volume formula. The values of $\lambda$ for which $|\chi|$ of the
principal subgroup is smaller
than $|\chi(M^n)|$ are listed below: 
$$ \def\arraystretch{1.3}
\begin{array}{c|c|c}
\hline 
\lambda & |\chi| & \text{denominator of }\chi \\
\hline 
\frac12(5^{15}-1) & 2.305...\cdot10^{189} & 82391859826240770906019357261824\cdot10^{18}\\ 
\frac12(9^{15}-1) & 1.555...\cdot10^{193} & 41154775137982403049959718912\cdot10^{18}\\
\frac12(11^{15}-1) & 3.155...\cdot10^{194} & 32109284800854974718802972372893696\cdot10^{16}\\ 
\hline
\end{array} $$

As before, considering maximal sugroups instead of principal subgroups
does not change the picture and it follows that a manifold
cover in each of these cases is larger than the noncompact manifold $M^n$.

\subsection{}\label{sec:odd-dim-trick}
Odd dimensional case presents us a different challenge: here the Euler characteristic is zero and we cannot take advantage of its integral properties in order to bound the degree of the smooth covers. One of the possible ways to proceed is to look at the orders of finite subgroups of $\pi_1(O^n)$. This indeed gives a bound for the degree, however, it is much smaller than bounds provided by the denominators of Euler characteristic in the neighboring even dimensions. Considerably stronger results can be obtained based on the following simple observation:

\begin{quotation}
  Small volume arithmetic hyperbolic orbifolds tend to contain totally geodesic even dimensional suborbifolds whose Euler characteristic can be used to obtain good bounds on the degrees of the smooth covers.
\end{quotation}

Indeed, assume that a group $\Gamma$ has an infinite index subgroup $\Gamma'$ and a torsion-free finite index subgroup $\Gamma_M$ with $[\Gamma:\Gamma_M] = f$. Then $\Gamma_M \cap \Gamma'$ is a torsion-free subgroup of $\Gamma'$ of index $d \le f$:
\begin{equation}
	\xymatrix{
	\Gamma \ar@{-}[r]^{\infty} \ar@{-}[d]^{f} & \Gamma' \ar@{-}[d]^{d} \\
	\Gamma_M    \ar@{-}[r]^-\infty & \Gamma_M\cap\Gamma'
	}
	\label{eq-degree-bound}
\end{equation}
This proposition can be easily checked by looking at the cosets $\Gamma/\Gamma_M$ and $\Gamma'/(\Gamma_M\cap\Gamma')$. It appears to be very useful for bounding degrees of the smooth covers in odd dimensions.

\subsection{}\label{sec:n=31}
We demonstrate the application of the method from \ref{sec:odd-dim-trick} for dimension $n = 31$. By
\cite[Section~3.5]{BelEme}, the minimal volume compact orbifold $O^{31}$ is defined by the quadratic form
$$f_{31}(x_0,x_1,\ldots,x_{31}) = (3-2\sqrt{5})x_0^2 + x_1^2 + \ldots x_{31}^2.$$ 
It has $\vol(O^{31}) = 2.415\ldots\cdot 10^{200} < \vol(M^{31}) = 3.113\ldots\cdot 10^{202}$, 
so we can not a priori exclude the possibility of some manifold cover of $O^{31}$ being smaller than $M^{31}$. 

A restriction $f_{30}$ of the form $f_{31}$ to the first $31$ coordinates defines a totally geodesic 
suborbifold $O_1^{30}\subset O^{31}$ that has a non-zero Euler characteristic. In order to 
compute $\chi(O_1^{30})$ we need to control the associated integral structure.

In what follows we will use the notation from \cite{BelEme} and \cite{GanHankeYu01}. Consider the quadratic space $(V, \frac12 f_{31})$.
We have the discriminant $\delta(\frac12f_{31}) = \frac{1}{2^{32}}(3-2\sqrt{5})$
and the Hasse-Witt invariant $\omega(V,\frac12f_{31}) = 1$ over all places including the dyadic place $v_2 = (2)$ of $k = \Q(\sqrt{5})$. 
Comparing this data with the local description of the group $\Gamma$ of $O^{31}$ given in \cite[Section~3]{BelEme}, we conclude that 
$\Gamma$ is isomorphic to the stabilizer of a maximal lattice $L$ in $(V,\frac12f_{31})$  
(this can be also confirmed by comparing the covolume of $\mathrm{Stab}(L)$ computed using \cite[Table~3]{GanHankeYu01} with $\vol(O^{31})$
and recalling the uniqueness of $O^{31}$).
The lattice $L$ is defined uniquely up to a conjugation under $\G$ but different (conjugate) lattices may have different 
restriction to the subspace $(V', \frac12f_{30})$ of $V$. Geometrically this corresponds to choosing different totally
geodesic $30$-dimensional suborbifolds of $O^{31}$. In order to complete the computation we need to fix the lattice $L$. 
This can be done as follows: First take a maximal lattice $L'$ in $(V' , \frac12f_{30})$. Now consider an integral lattice in $V$
generated by $L'$ and the vector $(0, ..., 0, 2)$. It is contained in some maximal lattice in $V$ and we can choose this lattice to be our 
lattice $L$. This construction implies that the restriction of $L$ onto the subspace $V'\subset V$ is the maximal lattice $L'$ in 
$(V' , \frac12f_{30})$. Now we can use \cite[Table~4]{GanHankeYu01} to compute the covolume of $\mathrm{Stab}(L')$, and hence the 
Euler characteristic of the corresponding suborbifold:
$$|\chi(O_1^{30})| = \frac{4^{15}-1}2 \cdot \frac{11^{15}+1}2 \cdot \frac{1}{4^{14}}\prod_{i=1}^{15} |\zeta_k(1-2i)| = 1.694\ldots\cdot 10^{202},$$ 
with denominator 
$$D_1 = 290623844184270796846629126144000000000000000000.$$ 
Hence by \eqref{eq-degree-bound}, the degree $f$ of any smooth cover of $O^{31}$ is $\ge D_1$, and so its volume 
\begin{align} \vol(O^{31})\cdot f > 7.019 \ldots\cdot 10^{247} > \vol(M^{31}).  \end{align}

It remains to check if there are other compact arithmetic $31$-dimensional orbifolds that
have volume $\le \vol(M^{31})$. Similar to the previous discussion, these can be either defined over a different
field $k$, or have a different splitting field $\ell$, or have a non-trivial lambda factor in the volume formula. 
Note that by \eqref{eq:quotient-odd1} the ratio
\begin{align}
  \frac{\vol(O^{31})}{\vol(M^{31})} &> 0.007.
\end{align}
If we change the defining field then its discriminant in the volume formula for
$\vol(O^{31})$ would contribute a factor of at least $(8/5)^{r^2-r/2}
(1/11)^{r-1/2}> 3.021\cdot 10^{34}$ (cf.~\cite{BelEme}). This immediately brings it out of the range of
consideration. Changing the field $\ell$ gives a factor $\ge \left(\frac{400}{275}\right)^{31/2} = 332.868\ldots$
(as the field $\ell$ that corresponds to the minimal volume orbifold has discriminant
$\mathcal{D}_\ell = 275$ and the next possible value is $\mathcal{D} = 400$). This factor
is already sufficiently large to make the volume $>\vol(M^{31})$. Finally, following 
\cite[equation~ (5)]{BelEme}, the lambda factor is bounded below by 
$\frac23\left(\frac34 q_v\right)^{r_v}$ with $q_v \ge 4$, $r_v = 16$ if $q_v\neq 11$ and $r_v = 15$ for the remaining place $v$ of $k$ that ramifies in $\ell/k$. 
Hence the smallest possible lambda factor is at least $28697814$, which again makes the volume of the corresponding orbifolds 
too large. Similar to the previous sections, this argument based on principal arithmetic subgroups extends to maximal arithmetics 
in a straightforward way. 

This finishes the proof of Theorem \ref{thm:conj-ok}. \qed

\bibliographystyle{amsplain}
\bibliography{manifolds}

\end{document}